\begin{document}
\sloppy

\begin{center}
{\large Alexander Nakonechny,
Yuri Podlipenko }
\end{center}

\begin{center}
{\bf \large The minimax approach to the estimation of solutions to 
first order linear systems of ordinary differential periodic equations with inexact data.
 }
\end{center}
\bigskip

\begin{center}
{\bf Abstract}
\end{center}
{\it We consider
first-order linear systems of ordinary differential equations with periodic coefficients. Supposing that right-hand sides of
equations  are not known and subjected to some quadratic restrictions,
we obtain optimal, in certain sense, estimates of solutions to above-mentioned problems from indirect  noisy observations of these solutions on a finite system of points and intervals.
   }

\newtheorem{predlllr}{Remark}
\newtheorem{predlll}{Corollary}
\newtheorem{pred}{Theorem}
\newtheorem{predl}{Lemma}
\newtheorem{predllll}{Proposition}
\newtheorem{predll}{Proposition}
\newtheorem*{predd}{Teoрема}
\newtheorem{predlllv}{Definition}
\newtheorem{pr*}{Theorem 1.}

\newcommand{\n}{^{(1)}}
\newcommand{\m}{^{(2)}}
\newcommand{\p}{^{(1,1)}}
\newcommand{\q}{^{(1,2)}}
\newcommand{\s}{^{(2,1)}}
\newcommand{\w}{^{(2,2)}}

\renewcommand{\proofname}{Proof}

\begin{center}
{\bf \large {Introduction}}
\end{center}

Estimation theory for systems with lumped and distributed parameters under
uncertainty conditions was developed intensively during the last 30 years when essential
results for ordinary and partial differential equations have been
obtained. That
was motivated by the fact that the realistic setting of boundary value problems
describing physical processes often contains perturbations of unknown (or partially
unknown) nature. In such cases the minimax estimation method proved to be
useful, making it possible to obtain optimal estimates both for the unknown
solutions (or right-hand sides of equations appearing in the boundary value
problems) and for linear functionals from them, that is, estimates looked for in
the class of linear estimates with respect to observations
\footnote{
	Here we understand observations of unknown solutions as the functions that are linear transformations of same solutions distorted by additive random noises.},
for which the maximal
mean square error taken over all the realizations of perturbations from certain
given sets takes its minimal value.
Such estimates are called the guaranteed or minimax estimates.

Minimax estimation is studied in a big number of works;
one may refer e.g. to \cite{BIBL5a}--\cite{BIBLkurzh}
and the bibliography therein.

Let us formulate a general approach to the problem. If a state of a system is described by a linear ordinary differential equation
\begin{equation}\label{oopp}
\frac{dx(t)}{dt}=Ax(t)+Bv_1(t),\quad x(t_0)=x_0,
\end{equation}
and a function $y(t)$ is observed in a time interval $[t_0,T]$,
where $y(t)=Hx(t)+v_2(t),$ $x(t) \in \mathbb C^n$, $v_2 \in
\mathbb C^m,$ $y\in \mathbb C^m,$ and $A,\,B,\, H$ are known
matrices, the minimax estimation problem consists in the most
accurate determination of a function $x(t)$ at the "worst"
realization of unknown quantities $(x_0,v_1(\cdot),v_2(\cdot))$
taken from a certain set. N.N. Krasovskii was the first who stated
this problem in \cite{BIBLkras}. Under different constraints imposed
on function $v_2(t)$ and for known function $v_1(t)$ he proposed
various methods of estimating inner products $(a,x(T))$ in the
class of operations linear with respect to observations that
minimize the maximal error. Later these estimates were called
minimax a priori estimates (see \cite{BIBLkras}, \cite{BIBLkurzh}).

Fundamental results concerning estimation under uncertainties were obtained by A. B. Kurzhanskii (see \cite{BIBLkurzh}, \cite{BIBL14}).

 The duality principle elaborated in \cite{BIBLkras},
 \cite{BIBLkurzh}, and \cite{BIBL5a}) proved its efficiency for the determination of minimax estimates \cite{BIBL5a}. According to this principle, finding minimax a priori estimates can be reduced to a certain problem of optimal control of the system adjoint to \eqref{oopp}; this approach enabled one to obtain, under certain restrictions, recurrent equations, namely, the minimax Kalman--Bucy filter (see \cite{BIBL5a}).

The present paper is devoted to the problem of guaranteed estimation for 
systems described by
first-order linear systems of ordinary differential periodic equations with inexact data. From indirect noisy observations of unknown solutions
on a finite system of points and intervals, under quadratic restrictions on unknown right-hand sides of equations, we
find the minimax estimates both for the unknown
solutions
and for linear functionals from them.
It is proved that guaranteed estimates and estimation errors are expressed explicitly from the solutions
of special systems of linear ordinary differential periodic equations, for which the unique solvability
is established.

To do this,
we reduce the guaranteed estimation problem to a certain optimal control problem.
Solving this optimal control problem, we obtain uniquely solvable system of ODEs
via whose solutions the minimax estimates are expressed.

{\bf Preliminaries and auxiliary results}\label{sub1}

Let vector-function $x(t)\in \mathbb C^n$ be a solution of the following  problem
\begin{equation}\label{Cau0p}
\frac{dx(t)}{dt}=A(t)x(t)+B(t)f(t),\quad t\in \mathbb R,
\end{equation}
where $A(t)=[a_{ij}(t)]_{i,j=1}^n$ is an $n\times n$-matrix and
$B(t)=[b_{ij}(t)],i=1,\ldots,n,j=1,\ldots,r,$ is an $n\times r$-matrix with entries $a_{ij}(t)$ and $b_{ij}(t)$ which are continuous $T$-periodic functions,
$f(t)\in \mathbb C^r$ is a $T$-periodic vector-function such that
$f\in (L^2(0,T))^r.$

Denote by $X(t)$ a matrix-valued function $X(t)=[x_1(t),\dots,x_n(t)]$ whose columns are linearly
independent solutions $x_1(t),\dots,x_n(t)$ of the homogeneous system
\begin{equation}\label{homCau0}
\frac{dx(t)}{dt}=A(t)x(t)
\end{equation}
such that $X(0)=E_n,$ where $E_n$ is the unit $n \times n$-matrix. In this case $X(t)$ is said to be
a normalized fundamental matrix of the equation \eqref{homCau0}.

Further we will assume that the
following condition is valid
\begin{equation}\label{detne0}
\det(E_n-X(T))\neq 0.
\end{equation}

Here a solution $x(t)$ of equation \eqref{Cau0p} on the interval $(0,T)$ is interpreted as a continuous solution of the
integral equation
$$
x(t)=x(0)+\int_{0}^t(A(s)x(s)+B(s)f(s))ds
$$
or, equivalently, $x(t)$ satisfies the condition $x(0)=x(T)$ and is absolutely continuous on
$[0,T]$ with its derivative $x'(t)$ satisfying \eqref{Cau0p}
on
$(0,T)$ almost everywhere (except on a set of Lebesgue measure 0).
Outside of the interval $[0,T],$ $x(t)$ is supposed to be extended by periodicity
to the whole real axis, i.e., $x(t+T)=x(t)$ $\forall t\in \mathbb R.$

Under the condition \eqref{detne0} the unique solvability of the problem
\begin{equation}\label{Cau0}
\frac{dx(t)}{dt}=A(t)x(t)+B(t)f(t),\quad t\in (0,T),
\end{equation}
\begin{equation}\label{Cau10}
x(0)=x(T),
\end{equation}
or, what is the same,
the existance of a unique T-periodical solution of equation \eqref{Cau0p}
is established, for example, in \cite{Hart}-\cite{Samoilenko}.

Simultaneously with problem \eqref{Cau0}, \eqref{Cau10}, the following problem:
given vector-function $g(t)\in \mathbb C^n$ such that $g\in (L^2(0,T))^n,$ $r_i\in \mathbb C^N,$ and $t_i\in (0,T),$
$0<t_1<\dots <t_N<T,$ find vector-function $z(t)\in \mathbb C^n$  such that
\begin{equation}\label{adjCau40}
-\frac{dz(t)}{dt}=A^*(t)z(t)+g(t), \quad t\in (0,T),\quad t\neq t_i,
\end{equation}
\begin{equation}\label{adjCau50}
\Delta z\left.\right|_{t=t_i}=z(t_i+0)-z(t_i)=r_i,\quad i=1,\ldots,N,\quad z(0)=z(T),
\end{equation}
is uniquely solvable.\footnote{Here and in what follows, by $\Lambda^*$ we will denote the matrix
complex conjugate and transpose of a matrix
$\Lambda.$}
In fact,
by Theorem 4.1 \cite{Bainov} problem \eqref{adjCau40}--\eqref{adjCau50} will have a unique solution
then and only then
\begin{equation}\label{adjCau40det}
\det(E_n-Z(T))\neq 0,
\end{equation}
where $Z(t)$ is a normalized fundamental matrix of the system
\begin{equation}\label{homadjCau40}
-\frac{dz(t)}{dt}=A^*z(t)
\end{equation}
adjoint to \eqref{homCau0}.
As is known
$Z(t)=[X^*(t)]^{-1}.$
Since $\det(E_n-X(T))\neq 0$ then $E_n-X^*(T)$ is a nonsingular matrix. Multiplying this matrix from the left  by $-[X^*(T)]^{-1},$
we obtain that the matrix
$$
-[X^*(T)]^{-1}(E_n-X^*(T))=E_n-Z(T)
$$
is also nonsingular, i.e., condition \eqref{adjCau40det} is fulfilled.

In addition, from the results containing in chapter 4 of \cite{Bainov} it follows that an a priori estimate
\begin{equation}\label{Cau600}
\|z(t)\|_{\mathbb C^n}\leq K\Bigl\{\int_{0}^T\|g(s)\|_{\mathbb C^n}ds+\sum_{i=1}^N|r_i|\Bigr\}\quad\forall t\in [0,T], \end{equation}
holds,
where $K$ is a constant not depending on $g$ and $r_i$.

Further, the following assertion will be frequently used.
If vector-functions $f(t)\in \mathbb C^n$ and $g (t)\in \mathbb C^n$ are absolutely continuous on the closed interval $[t_1,t_2],$
then the following integration by parts formula is valid
\begin{equation}\label{Cau30}
(f(t_2),g(t_2))_{n}-(f(t_1),g(t_1))_{n}=
\int_{t_1}^{t_2}\Bigl[\Bigl(f(t),\frac{dg(t)}{dt}\Bigr)_{n}+\Bigl(g(t),\frac{df(t)}{dt}\Bigr)_{n}\Bigr]dt,
\end{equation}
where by $(\cdot,\cdot)_{n}$ we denote here and later on the inner product in $\mathbb C^n.$
\begin{predl}\label{lemm1}
 Suppose  $Q$ is a bounded positive\footnote{That  is
$(Qf,f)_H>0$ when $f\neq 0$.} Hermitian (self-adjoint) operator in a complex (real) Hilbert space $H$ with bounded inverse
$Q^{-1}.$
Then, the generalized Cauchy$-$Bunyakovsky inequality
\begin{equation}\label{CBI1}
|(f, g)_H|\leq (Q^{-1}f, f)_H^{1/2}(Qg,g)_H^{1/2}\quad (f,g\in H)
\end{equation}
is valid.
The equality sign in \eqref{CBI1} is attained at the element
$$
g=\frac{Q^{-1}f}{(Q^{-1}f,f)_H^{1/2}}.
$$
\end{predl}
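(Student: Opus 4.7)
The plan is to derive the generalized inequality from the ordinary Cauchy--Bunyakovsky inequality applied in a suitably chosen auxiliary inner product, which avoids any appeal to spectral calculus or to the existence of a square root of $Q$.

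First I would introduce the sesquilinear form $\langle u,v\rangle_Q:=(Qu,v)_H$ on $H$. Since $Q$ is bounded, Hermitian, and positive, this form is linear in the first argument, conjugate-linear in the second, conjugate-symmetric (by self-adjointness of $Q$), and strictly positive definite (by the assumption $(Qu,u)_H>0$ for $u\neq 0$). Hence $\langle\cdot,\cdot\rangle_Q$ is an inner product on $H$, to which the ordinary Cauchy--Bunyakovsky inequality applies.

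The key algebraic step is the identity
\begin{equation*}
(f,g)_H=(QQ^{-1}f,g)_H=\langle Q^{-1}f,g\rangle_Q,
\end{equation*}
which rewrites the left-hand side of \eqref{CBI1} as an inner product in the new metric. Applying Cauchy--Bunyakovsky for $\langle\cdot,\cdot\rangle_Q$ then gives
\begin{equation*}
|(f,g)_H|=|\langle Q^{-1}f,g\rangle_Q|\le\langle Q^{-1}f,Q^{-1}f\rangle_Q^{1/2}\,\langle g,g\rangle_Q^{1/2}.
\end{equation*}
Using self-adjointness of $Q$ once more, $\langle Q^{-1}f,Q^{-1}f\rangle_Q=(QQ^{-1}f,Q^{-1}f)_H=(f,Q^{-1}f)_H=(Q^{-1}f,f)_H$, while $\langle g,g\rangle_Q=(Qg,g)_H$. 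Substituting yields \eqref{CBI1}.

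For the equality statement, I would substitute the proposed $g=Q^{-1}f/(Q^{-1}f,f)_H^{1/2}$ (assuming $f\neq 0$, for which $(Q^{-1}f,f)_H>0$ by positivity of $Q^{-1}$) directly into both sides: the left-hand side becomes $(f,Q^{-1}f)_H/(Q^{-1}f,f)_H^{1/2}=(Q^{-1}f,f)_H^{1/2}$, while $(Qg,g)_H=(QQ^{-1}f,Q^{-1}f)_H/(Q^{-1}f,f)_H=1$, so the right-hand side equals $(Q^{-1}f,f)_H^{1/2}$ as well. The only subtle point I would need to justify explicitly is that $Q^{-1}$ is also positive Hermitian (so that $(Q^{-1}f,f)_H>0$ for $f\neq 0$), which follows from $Q^{-1}=(Q^{-1})^*(QQ^{-1})^*$-type manipulations, or more directly from $(Q^{-1}f,f)_H=(Q^{-1}f,QQ^{-1}f)_H=(QQ^{-1}f,Q^{-1}f)_H>0$ whenever $Q^{-1}f\neq 0$, i.e., $f\neq 0$. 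This is the only place where a little care is required; the rest is essentially a single application of the standard Cauchy--Bunyakovsky inequality.
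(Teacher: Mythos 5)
Your proof is correct and follows essentially the same route as the paper: both arguments introduce a weighted inner product and apply the ordinary Cauchy--Bunyakovsky inequality in it, avoiding any spectral calculus; you weight by $Q$ and apply it to the pair $(Q^{-1}f,\,g)$, while the paper weights by $Q^{-1}$ and applies it to $(f,\,Qg)$, which is just a mirror image of the same computation. Your additional checks (positivity of $Q^{-1}$ and the direct verification of the equality case) are sound and in fact tidier than the paper's presentation, which leaves these implicit.
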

\begin{proof}
Introduce
Hilbert space $\tilde H$ consisting of elements of $H$ endowed with inner product
\begin{equation}\label{CBI11}
(u,v)_{\tilde H}:=(Q^{-1}u,v)_H^{1/2}
\end{equation}
and norm
$$
\|v\|_{\tilde H}:=(Q^{-1}v,v)_H^{1/2}
$$
generated by this inner product.
Due to the properties of the operator $Q,$ the above inner product is well-defined.
Setting in the Cauchy$-$Bunyakovsky inequality
\begin{equation}\label{CBI111}
(u,v)_{\tilde H}\leq \|u\|_{\tilde H}^{1/2}\|v\|_{\tilde H}^{1/2}
\end{equation}
$u=f$ and $v=Qg,$ we obtain
$$
(f,Qg)_{\tilde H}\leq \|f\|_{\tilde H}^{1/2}\|Qg\|_{\tilde H}^{1/2}=(Q^{-1}f,f)_H^{1/2}(Q^{-1}Qg,Qg)_H^{1/2}=(Q^{-1}f, f)_H^{1/2}(Qg,g)_H^{1/2},
$$
where the equality is attained when
$$
g=\frac{Q^{-1}f}{(Q^{-1}f,f)_H^{1/2}}.
$$
\end{proof}

{\bf Problem statement}

In this section we study minimax estimation problems in the case
of point observations and deduce equations generated the minimax estimates of functionals from periodic solutions to the problem
\begin{equation}\label{oopp11}
\frac{dx(t)}{dt}=Ax(t)+B(t)f(t),\quad t\in (0,T),
\end{equation}
\begin{equation}\label{Cau1011}
x(0)=x(T).
\end{equation}
Let $t_i,$ $i=1,\dots,N,$ $0<t_1<\dots<t_N<T$ be a given system of points on the closed
interval $[0,T]$
with $t_{0}=0$ and $t_{N+1}=T$ and let $\Omega_j,$
$j=1,\dots,M,$  be a given system of subintervals of $[0,T]$.
The problem is to estimate the expression
\begin{equation}\label{Cau120vv}
l(x)=\int_{0}^{T}(x(t),l_0(t))_{n}dt,
\end{equation}
from observations of the form
\begin{equation}\label{1p3vvv}
y_i=H_{i}x(t_i)+\xi_i,\quad i=1,\dots,N,
\end{equation}
\begin{equation}\label{sk7s} y_j(t)=H_j(t)x(t)+\xi_j(t),\quad t\in \Omega_j,\quad j=1,\dots,M,
\end{equation}
in the class of estimates
\begin{equation}\label{3p3vvv}
\widehat{l(x)}=\sum_{i=1}^N (y_i,u_i)_m+\sum_{j=1}^M \int_{\Omega_j}(y_j(t),u_j(t))_ldt+c,
\end{equation}
linear with respect to observations (\ref{1p3vvv}), (\ref{sk7s}); here $x(t)$ is
the state of a system described by the Cauchy problem (\ref{Cau0}), (\ref{Cau10}), $l_0\in (L^2(0,T))^n,$
$H_{i}$ are $m\times n$ matrices,
$H_j(t)$ are $l\times n$ matrices with the entries that are continuous functions on $\Omega_j,$ $u_i\in \mathbb C^m,$
$u_j(t)$ are vector-functions belonging to $(L^2(\Omega_j))^l,$
$c\in \mathbb C.$
We suppose that
$f\in G_1,$ where
\begin{equation}\label{Cau90}
G_1=\Big.\Big\{\tilde f\in (L^2(0,T))^r: \int_{0}^{T} (Q(t)(\tilde f(t)-f_0(t)),\tilde f(t)-f_0(t))_r\,dt
\leq 1
\Big.\Big\},
\end{equation}\label{llm}
$\xi:=(\xi_{1},\ldots,\xi_{N},\xi_{1}(\cdot),\ldots,\xi_{M}(\cdot))\in G_2,$ where
$$
\xi_i=
\begin{pmatrix}
\xi_1^{(i)}\\ \vdots\\ \xi_m^{(i)}
\end{pmatrix}
\quad \mbox{and}\quad \xi_j(\cdot)=
\begin{pmatrix}
\xi_1^{(j)}(\cdot)\\ \vdots\\ \xi_l^{(j)}(\cdot)
\end{pmatrix}
$$
are estimation errors in \eqref{1p3vvv} and (\ref{sk7s}), respectively, that are
realizations of random vectors  $\xi_i=\xi_i(\omega)\in \mathbb
C^m$ and random vector-functions $\xi_j(t)=\xi_j(\omega,t)\in \mathbb
C^l$ and $G_2$ denotes the set of random elements
$\tilde \xi=(\tilde \xi_{1},\ldots,\tilde \xi_{N},\tilde \xi_{1}(\cdot),\ldots,\tilde \xi_{M}(\cdot)),$
whose components
$$
\tilde \xi_i=
\begin{pmatrix}
\tilde \xi_1^{(i)}\\ \vdots\\ \tilde \xi_m^{(i)}
\end{pmatrix}
\quad \mbox{and}\quad \tilde \xi_j(\cdot)=
\begin{pmatrix}
\tilde \xi_1^{(j)}(\cdot)\\ \vdots\\ \tilde \xi_l^{(j)}(\cdot)
\end{pmatrix}
$$
are uncorrelated, have zero means, $\mathbb E\tilde \xi_i=0,$ and $\mathbb E\tilde \xi_j(\cdot)=0,$ with
finite
second moments  $\mathbb E |\tilde \xi_i|^2$ and $\mathbb E\|\tilde \xi_j(\cdot)\|_{(L^2(\Omega_j))^l}^2$,  and unknown correlation matrices
$\tilde R_i=\mathbb E\tilde\xi_i\tilde\xi_i^{*}
= [r_{jk}^{(i)}]_{j,k=1}^m$ with entries $r_{jk}^{(i)}=\mathbb E\tilde\xi_j^{(i)}\bar{\tilde\xi}_k^{(i)}$ and unknown correlation matrices $\tilde R_j(t,s)=\mathbb E\tilde\xi_j(t)\tilde\xi_j^{*}(s)$ satisfying the
 conditions
\begin{equation}\label{d6llvvv}
\sum_{i=1}^NSp\,[D_i\tilde R_i]\leq
1,
\end{equation}
and
\begin{equation}\label{d6llvvv'}
\sum_{j=1}^M\int_{\Omega_j} Sp\,[D_j(t)\tilde R_j(t,t)]dt\leq
1.
\end{equation}
correspondingly,
where $D_i=[d_{jk}^{(i)}]_{j,k=1}^m$ and $D_j(t)$ are Hermitian positive definite $m\times m$ and $l\times l$-matrices, respectively
\footnote{$\mbox{Sp}\,D := \sum_{i=1}^ld_{ii}$ denotes the trace of the matrix $D=\{d_{ij}\}_{i,j=1}^l$}.
Here in \eqref{Cau90}, $f_0\in (L^2(0,T))^r$ is a prescribed vector,\label{tt}
$Q(t)$
is a Hermitian positive definite matrix.

Set $u:=(u_1,\ldots,u_N,u_1(\cdot),\ldots,u_M(\cdot))\in \mathbb C^{N\times m}\times (L^2(\Omega_1))^l\times\dots\times (L^2(\Omega_M))^l =:H.$ Norm in space $H$ is defined by
$$
\|u\|_H=\Bigl\{\sum_{i=1}^N\|u_i\|_{\mathbb C^m}^2+\sum_{j=1}^M\|u_j(\cdot)\|_{(L^2(\Omega_j))^l}\Bigr\}^{1/2}.
$$

\begin{predlllv}The estimate
\begin{equation}\label{d6llvvves}
\widehat{\widehat
{l(x)}}=\sum_{i=1}^N(y_i,\hat u_i
)_m+\sum_{j=1}^M \int_{\Omega_j}(y_j(t),\hat u_j(t))_ldt+\hat c,
\end{equation}
in which elements $\hat u_i,$ $\hat u_j(\cdot),$ and a number $\hat c$
are determined from the condition
\begin{equation} \label{11fvvv}
\inf_{u\in
H,c\in \mathbb C}\sigma(u,c)=\sigma(\hat u,\hat c),
\end{equation}
where
$$
\sigma(u,c)=\sup_{\tilde f \in G_1,\, \tilde \xi
\in G_2}\mathbb E|l(\tilde x)-\widehat{l(\tilde x)}|^2,
$$
\begin{equation} \label{llxfvvv}
\widehat{l(\tilde x)}=\sum_{i=1}^N(\tilde y_i,u_i
)_m +\sum_{j=1}^M \int_{\Omega_j}(\tilde y_j(t),u_j(t))_ldt+c,
\end{equation}
\begin{equation} \label{4iufvvv}
\tilde y_{i}=H_{i}\tilde x(t_i)+\tilde \xi_i,\quad i=1,\dots,N, \quad \tilde y_{j}(t)=H_{j}(t)\tilde x(t)+\tilde \xi_j(t),\quad j=1,\dots,M,
\end{equation}
and $\tilde x(t)$ is the solution to the problem (\ref{Cau0}), (\ref{Cau10}) at $f(t)=\tilde f(t),$
will be called the minimax estimate of expression (\ref{Cau120vv}).

The quantity
\begin{equation} \label{12dfv}
\sigma:=\{\sigma(\hat u,\hat c)\}^{1/2}
\end{equation}
will be called the error of the minimax estimation of $l(x).$
\end{predlllv}

{\bf Main results}

For any fixed $u=(u_1,\ldots,u_N,u_1(\cdot),\ldots,u_M(\cdot))\in \mathbb C^{N\times m}\times (L^2(\Omega_1))^l\times\dots\times (L^2(\Omega_M))^l =H$ introduce the vector-function $z(t;u)$
as a unique solution to
the problem\footnote{Here and in what follows we assume that if a function is piecewise continuous then it is continuous from the left.}
\begin{equation}\label{Cau140aavvo}
-\frac{dz(t;u)}{dt}=A^*(t)z(t;u)+l_0(t)-\sum_{j=1}^M \chi_{\Omega_j}(t)H^*_j(t)u_j(t),\quad t\in (0,T),\quad t\neq t_i,
\end{equation}
\begin{equation}\label{Cau160avv'o}
\Delta z(\cdot;u)\left.\right|_{t=t_i}=z(t_i+0;u)-z(t_i;u)=H^*_iu_i,\quad i=1,\ldots,N,\quad z(T;u)=z(0;u),
\end{equation}
where $\chi_{\Omega}(t)$ is a characteristic function of the set $\Omega.$

\begin{predl}
Finding the minimax estimate of functional
$l(x)$ is equivalent to the problem of optimal control of the system (\ref{point1})
with the cost function
\begin{equation}\label{N4llvvv}
I(u)=\int_{0}^{T}(\tilde Q(t)z(t;u),z(t;u))_ndt
+\sum_{i=1}^{N}(D_i^{-1}u_i,u_i)_m+\sum_{j=1}^M\int_{\Omega_j}(D_j^{-1}(t)u_j(t),u_j(t))_ldt
\to \inf_{u\in H},
\end{equation}
where $\tilde Q(t)=B(t)Q^{-1}(t)B^*(t)$.
\end{predl}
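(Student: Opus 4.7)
The plan is to express $\sigma(u,c)$ explicitly in terms of $z(t;u)$ and then recognize $I(u)$. Start by substituting \eqref{4iufvvv} into \eqref{llxfvvv} to write
\begin{equation*}
l(\tilde x)-\widehat{l(\tilde x)} = \int_0^T(\tilde x(t),l_0(t))_n\,dt - \sum_{i=1}^N(\tilde x(t_i),H_i^*u_i)_n - \sum_{j=1}^M\int_{\Omega_j}(\tilde x(t),H_j^*(t)u_j(t))_n\,dt - c - N,
\end{equation*}
where $N=\sum_i(\tilde\xi_i,u_i)_m+\sum_j\int_{\Omega_j}(\tilde\xi_j(t),u_j(t))_l\,dt$ collects the noise terms. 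Using \eqref{Cau140aavvo} the combination $l_0(t)-\sum_j\chi_{\Omega_j}(t)H_j^*(t)u_j(t)$ equals $-z'(t;u)-A^*(t)z(t;u)$, so the deterministic integral becomes $-\int_0^T(\tilde x,z')_n\,dt - \int_0^T(\tilde x,A^*z)_n\,dt$.

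Next, apply the integration-by-parts identity \eqref{Cau30} separately on each subinterval $(t_i,t_{i+1})$ (where $z(\cdot;u)$ is continuous) and sum for $i=0,\dots,N$. The boundary sum telescopes, the periodicity $\tilde x(0)=\tilde x(T)$ and $z(0;u)=z(T;u)$ kills the contributions at $0$ and $T$, and the jump conditions \eqref{Cau160avv'o} give $\sum_{i=1}^N(\tilde x(t_i),z(t_i;u)-z(t_i+0;u))_n=-\sum_i(\tilde x(t_i),H_i^*u_i)_n$. Then substituting $\tilde x'=A\tilde x+B\tilde f$ and using adjointness to cancel the $A$-terms yields the central identity
\begin{equation*}
l(\tilde x)-\widehat{l(\tilde x)} = \int_0^T(B^*(t)z(t;u),\tilde f(t))_r\,dt - c - N,
\end{equation*}
in which the $\sum_i(\tilde x(t_i),H_i^*u_i)_n$ produced by integration by parts exactly cancels the pointwise-observation term from the initial expansion.

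Since the noise components have zero means and are mutually uncorrelated, the expectation $\mathbb E|\cdot|^2$ separates the deterministic square from the noise variance:
\begin{equation*}
\mathbb E|l(\tilde x)-\widehat{l(\tilde x)}|^2 = \Big|\int_0^T(B^*z,\tilde f)_r\,dt - c\Big|^2 + \sum_i u_i^*\tilde R_i u_i + \sum_j\int_{\Omega_j}\int_{\Omega_j}u_j^*(t)\tilde R_j(t,s)u_j(s)\,dt\,ds.
\end{equation*}
Writing $\tilde f=f_0+\phi$ and applying Lemma \ref{lemm1} in $H=(L^2(0,T))^r$ with the positive multiplication operator $Q(t)$ gives $\sup_\phi|\int_0^T(B^*z,\phi)_r\,dt|=(\int_0^T(\tilde Q(t)z(t;u),z(t;u))_n\,dt)^{1/2}$. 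The choice $\hat c=\int_0^T(B^*(t)z(t;u),f_0(t))_r\,dt$ removes the shift, so the infimum over $c$ of the supremum over $\tilde f\in G_1$ of the deterministic square is exactly $\int_0^T(\tilde Q z,z)_n\,dt$. Standard trace-inequality arguments ($v^*Rv\le\mathrm{Sp}(R)\|v\|^2$, saturated by rank-one $R$) applied to the constraints \eqref{d6llvvv}, \eqref{d6llvvv'} yield the remaining two terms $\sum_i(D_i^{-1}u_i,u_i)_m$ and $\sum_j\int_{\Omega_j}(D_j^{-1}(t)u_j(t),u_j(t))_l\,dt$. Summing the three contributions reproduces $I(u)$, so $\inf_c\sigma(u,c)=I(u)$ and the minimax problem is equivalent to minimizing $I(u)$ over $u\in H$, which is the claimed optimal control problem.

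The main technical obstacle is the careful bookkeeping of boundary contributions in the integration by parts: one must reconcile the left-continuity convention for $z(\cdot;u)$, the jump relations \eqref{Cau160avv'o}, and the periodic conditions so that the boundary sum collapses precisely to $-\sum_{i=1}^N(\tilde x(t_i),H_i^*u_i)_n$ and cancels the observation term. Once this identity is in place, the supremum and infimum computations are routine applications of Lemma \ref{lemm1} and standard PSD-trace optimisation.
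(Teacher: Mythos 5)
Your proposal is correct and follows essentially the same route as the paper: the integration by parts over the subintervals with the jump and periodicity conditions to reach the identity $l(\tilde x)-\widehat{l(\tilde x)}=\int_0^T(B(t)\tilde f(t),z(t;u))_n\,dt-c-\text{(noise)}$, the splitting of $\mathbb E|\cdot|^2$ via zero-mean uncorrelated noise, Lemma \ref{lemm1} for the supremum over $G_1$ together with the optimal choice of $c$, and the computation of the noise supremum under \eqref{d6llvvv}, \eqref{d6llvvv'}. The only (immaterial) difference is that you bound the noise variance through the correlation matrices and a trace inequality saturated by rank-one correlations, whereas the paper applies Lemma \ref{lemm1} with weights $D_i$, $D_j(t)$ and exhibits an explicit saturating element of $G_2$; these are equivalent computations.
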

\begin{proof}
Denote by $z_{i}(t;u)$ restriction of function $z(t;u)$ to a subinterval $(t_{i-1},t_i)$ of the interval
$(0,T)$ and extend it from this subinterval to the ends $t_{i-1}$ and $t_{i}$ by continuity. Then
$$
L^* z_1(t;u)=l_0(t)-\sum_{j=1}^M \chi_{\Omega_j}(t)H^*_j(t)u_j(t),\quad 0=:t_0<t<t_1, \quad z_2(t_1;u)=z_1(t_1;u)+H^*_1u_1,
$$
$$
L^* z_2(t;u)=l_0(t)-\sum_{j=1}^M \chi_{\Omega_j}(t)H^*_j(t)u_j(t),\quad t_1<t<t_2, \quad z_3(t_2;u)=z_2(t_2;u)+H^*_2u_2,
$$
$$
\ldots\ldots\ldots\ldots\ldots\ldots\ldots\ldots\ldots
\ldots\ldots\ldots\ldots\ldots\ldots\ldots\ldots
$$
\begin{equation}\label{point1}
L^* z_{i}(t;u)=l_0(t)-\sum_{j=1}^M \chi_{\Omega_j}(t)H^*_j(t)u_j(t),\quad t_{i-1}<t<t_{i}, \quad z_{i+1}(t_{i};u)=z_{i}(t_{i};u)+H^*_iu_{i},
\end{equation}
$$
\ldots\ldots\ldots\ldots\ldots\ldots\ldots\ldots\ldots
\ldots\ldots\ldots\ldots\ldots\ldots\ldots\ldots\\
$$
$$
L^* z_{N}(t;u)=l_0(t)-\sum_{j=1}^M \chi_{\Omega_j}(t)H^*_j(t)u_j(t),\quad t_{N-1}<t<t_{N}, \quad z_{N+1}(t_{N};u)=z_{N}(t_{N};u)+H_{N}^*u_{N},
$$
\begin{multline*}
L^* z_{N+1}(t;u)=L^* z_{N+1}(t)=l_0(t)\\-\sum_{j=1}^M \chi_{\Omega_j}(t)H^*_j(t)u_j(t),\quad t_{N}<t<t_{N+1}:=T, \quad z_{N+1}(T;u)=z_{1}(0;u),
\end{multline*}
where\label{same}
$$
z_{i+1}(t_{i};u):=z_{i+1}(t_{i}+0;u),\quad z_{i}(t_{i};u):=z_{i}(t_{i}-0;u),\quad i=1,\dots,N,
$$
$$
L^*z_i(t;u):=-\frac{dz_i(t;u)}{dt}-A^*(t)z_i(t;u),\quad t_{i-1}<t<t_{i},\quad i=1,\dots,N+1.
$$
Obviously,
$$
z(t;u)=\left\{
\begin{array}{lc}
z_1(t;u),&0=t_0<t\leq t_1;\\ \cdots\cdots\cdots&\cdots\cdots\cdots\cdots\\
z_{i}(t;u),&t_{i-1}<t\leq t_{i};\\
\cdots\cdots\cdots&\cdots\cdots\cdots\cdots\\
z_{N+1}(t),&t_{N}<t\leq t_{N+1}=T, \quad z(0)=z(T).
\end{array}
\right.
$$

  Let $\tilde x$ be a solution to problem (\ref{Cau0}), (\ref{Cau10}) at $f(t)=\tilde f(t).$
 From \eqref{Cau120vv} with $x=\tilde x,$ \eqref{4iufvvv},
and the integration by parts formula \eqref{Cau30} with $f(t)=\tilde x(t),$ $g(t)=z(t;u),$
we obtain
$$
l(\tilde x)-\widehat{l(\tilde x)}=\sum_{i=1}^{N+1}\int_{t_{i-1}}^{t_{i}}(\tilde x(t),l_0(t))_{n}dt
-\sum_{i=1}^{N}(\tilde y_i,u_i)_m-\sum_{j=1}^M \int_{\Omega_j}(\tilde y_j(t),u_j(t))_ldt-c
$$
$$
=\sum_{i=1}^{N+1}\int_{t_{i-1}}^{t_{i}}(\tilde x(t),l_0(t)-\sum_{j=1}^M \chi_{\Omega_j}(t)H^*_j(t)u_j(t))_{n}dt
-\sum_{i=1}^{N}(\tilde x(t_i),H_i^*u_i)_n-\sum_{i=1}^{N}(\tilde \xi_i,u_i)_m
$$
$$
-\sum_{j=1}^M \int_{\Omega_j}(\tilde\xi_j(t),u_j(t))_ldt-c
$$
$$
=\sum_{i=1}^{N+1}\int_{t_{i-1}}^{t_{i}}\Bigl(\tilde x(t),-\frac{dz_i(t;u)}{dt}-A^*(t)z_i(t;u)\Bigr)_{n}dt
-\sum_{i=1}^{N}(\tilde x(t_i),H_i^*u_i)_n-\sum_{i=1}^{N}(\tilde \xi_i,u_i)_m
$$
$$
-\sum_{j=1}^M \int_{\Omega_j}(\tilde\xi_j(t),u_j(t))_ldt-c
$$
$$
=\sum_{i=1}^{N+1}\Bigl((\tilde x(t_{i-1}),z_i(t_{i-1};u))_n-(\tilde x(t_{i}),z_i(t_{i};u))_n \Bigr)
+\sum_{i=1}^{N+1}\int_{t_{i-1}}^{t_{i}}\Bigl(\frac{d\tilde x(t)}{dt}-A(t)\tilde x(t),z_i(t;u)\Bigr)_{n}dt
$$
$$
-\sum_{i=1}^{N}(\tilde x(t_i),z_{i+1}(t_i;u)-z_{i}(t_i;u))_n-\sum_{i=1}^{N}(\tilde \xi_i,u_i)_m
-\sum_{j=1}^M \int_{\Omega_j}(\tilde\xi_j(t),u_j(t))_ldt-c
=(x(t_0),z_1(t_0;u))_n
$$
$$
-(z_1(x(t_1),t_1;u))_n+\sum_{i=2}^{N}\Bigl(\tilde x(t_{i-1}),z_i(t_{i-1};u))_n-(\tilde x(t_{i}),z_i(t_{i};u))_n \Bigr)+(\tilde x(t_N),z_{N+1}(t_N))_n
$$
$$
-(\tilde x(t_{N+1}),z_{N+1}(t_{N+1}))_n+\sum_{i=1}^{N+1}\int_{t_{i-1}}^{t_{i}}\Bigl(B(t)\tilde f(t),z_i(t;u)\Bigr)_{n}dt
$$
$$
-\sum_{i=1}^{N}(\tilde x(t_i),z_{i+1}(t_i;u)-z_{i}(t_i;u))_n-\sum_{i=1}^{N}(\tilde \xi_i,u_i)_m
-\sum_{j=1}^M \int_{\Omega_j}(\tilde\xi_j(t),u_j(t))_ldt-c.
$$
Taking into account that
$$
\sum_{i=2}^{N}(\tilde x(t_{i-1}),z_i(t_{i-1};u))_n+(\tilde x(t_N),z_{N+1}(t_N))_n=\sum_{i'=1}^{N-1}(\tilde x(t_{i'}),z_{i'+1}(t_{i'};u))_n
+(\tilde x(t_N),z_{N+1}(t_N))_n
$$
$$
=\sum_{i=1}^{N}(\tilde x(t_{i}),z_{i+1}(t_{i};u))_n,
$$
from latter equalities, we have
$$
l(\tilde x)-\widehat{l(\tilde x)}=\sum_{i=1}^{N+1}\int_{t_{i-1}}^{t_{i}}\Bigl(B(t)\tilde f(t),z_i(t;u)\Bigr)_{n}dt-\sum_{i=1}^{N}(\tilde \xi_i,u_i)_m
-\sum_{j=1}^M \int_{\Omega_j}(\tilde\xi_j(t),u_j(t))_ldt-c
$$
\begin{equation}\label{hhll}
=\int_{0}^{T}\Bigl(B(t)\tilde f(t),z(t;u)\Bigr)_{n}dt-\sum_{i=1}^{N}(\tilde \xi_i,u_i)_m
-\sum_{j=1}^M \int_{\Omega_j}(\tilde\xi_j(t),u_j(t))_ldt-c.
\end{equation}
The latter equality yields
\begin{equation}\label{Cau210}
\mathbb E[l(\tilde x)-\widehat{l(\tilde x)}]=
\int_{0}^{T}\Bigl(\tilde f(t),B^*(t)z(t;u)\Bigr)_{r}dt-c.
\end{equation}
Taking into consideration the known relationship
\begin{equation}\label{disp}
\mathbb {D}\eta=\mathbb E|\eta|^2-|\mathbb E\eta|^2
\end{equation}
that couples the variance $\mathbb {D}\eta=\mathbb
E|\eta-\mathbb E\eta|^2$ of random variable $\eta$ with its expectation
$\mathbb E\eta,$ in which $\eta$ is determined by right-hand side of \eqref{hhll} and noncorrelatedness of $\tilde \xi_i=(\tilde \xi_1^{(i)},\dots,\tilde \xi_m^{(i)})^T$ and $\tilde \xi_j(\cdot)=(\tilde \xi_1^{(j)}(\cdot),
\dots,\tilde \xi_l^{(j)}(\cdot))^T$, from the equalities \eqref{hhll} and \eqref{Cau210} we find
$$
\mathbb E|l(\tilde x)-\widehat{l(\tilde x)}|^2=\Bigl|\int_{0}^{T}\Bigl(\tilde f(t),B^*(t)z(t;u)\Bigr)_{r}dt-c\Bigr|^2
+\mathbb E\Bigl|\sum_{i=1}^{N}(\tilde \xi_i,u_i)_m
+\sum_{j=1}^M \int_{\Omega_j}(\tilde\xi_j(t),u_j(t))_ldt\Bigr|^2
$$
$$
=\Bigl|\int_{0}^{T}\Bigl(\tilde f(t)-f_0(t),B^*(t)z(t;u)\Bigr)_{r}dt
+\int_{0}^{T}\Bigl(f_0(t),B^*(t)z(t;u)\Bigr)_{r}dt
-c\Bigr|^2
$$
$$
+\mathbb E\Bigl|\sum_{i=1}^{N}(\tilde \xi_i,u_i)_m\Bigr|^2+\mathbb E\Bigl|\sum_{j=1}^M \int_{\Omega_j}(\tilde\xi_j(t),u_j(t))_ldt\Bigr|^2.
$$
Thus,
$$
\inf_{c \in \mathbb C}\sigma(u,c)= \inf_{c \in \mathbb C} \sup_{\tilde f \in G_1,\, \tilde \xi \in G_2}
\mathbb E|l(\tilde x)-\widehat{l(\tilde x)}|^2=
$$
$$
=\inf_{c \in \mathbb C}\sup_{\tilde f \in
G_1}\Bigl|\int_{0}^{T}\Bigl(\tilde f(t)-f_0(t),B^*(t)z(t;u)\Bigr)_{r}dt
+\int_{0}^{T}\Bigl(f_0(t),B^*(t)z(t;u)\Bigr)_{r}dt
-c\Bigr|^2
$$
\begin{equation}\label{Cau220}
+\sup_{\tilde \xi \in G_2} \Bigl(\mathbb E\left|\sum_{i=1}^{N}(\tilde \xi_i,u_i)_m\right|^2+\mathbb E\Bigl|\sum_{j=1}^M \int_{\Omega_j}(\tilde\xi_j(t),u_j(t))_ldt\Bigr|^2\Bigr)
\end{equation}
Set
$$
y:=\int_{0}^{T}\Bigl(\tilde f(t)-f_0(t),B^*(t)z(t;u)\Bigr)_{r}dt,
$$
$$
d=c-\int_{0}^{T}\Bigl(f_0(t),B^*(t)z(t;u)\Bigr)_{r}dt.
$$
Then
Lemma 1 and \eqref{Cau90}
imply
$$
|y|\leq \Bigl|\int_{0}^{T}(Q^{-1}(t)B^*(t)z(t;u),B^*(t)z(t;u))_rdt\Bigr|^{1/2}
\Bigl|
\int_{t_0}^{t_1} (Q(t)(\tilde f(t)-f_0(t)),\tilde f(t)-f_0(t))_r\,dt\Bigr|^{1/2}
$$
\begin{equation}\label{Cau230}
\leq \Bigl|\int_{0}^{T}(\tilde Q(t)z(t;u),z(t;u))_ndt\Bigr|^{1/2}=:l.
\end{equation}
The direct substitution shows that last inequality
is transformed to an equality at $f^{(0)}\in G_1,$ where
$$
f^{(0)}=f_0 \pm \frac {Q^{-1}B^*(\cdot)z(\cdot;u)}{(Q^{-1}(\cdot)B^*(\cdot)z(\cdot;u),B^*(\cdot)z(\cdot;u))_{(L^2(t_0,t_1))^r}^{1/2}}.
$$
Taking into account the equality
$$
\inf_{d\in \mathbb C}\sup_{|y|\leq l}|y-d|^2=l^2,
$$
we find
$$
\inf_{c\in \mathbb C}\sup_{\tilde f\in G_1}\Bigl|\int_{0}^{T}\Bigl(\tilde f(t)-f_0(t),B^*(t)z(t;u)\Bigr)_{r}dt
+\int_{0}^{T}\Bigl(f_0(t),B^*(t)z(t;u)\Bigr)_{r}dt-c\Bigr|^2
$$
\begin{equation}\label{Cau240}
=l^2=\int_{0}^{T}(\tilde Q(t)z(t;u),z(t;u))_ndt,
\end{equation}
where the infimum over $c$ is attained at
\begin{equation}\label{infc}
c=\int_{0}^{T}\Bigl(f_0(t),B^*(t)z(t;u)\Bigr)_{r}dt.
\end{equation}
Calculate the last term on the right-hand side of (\ref{Cau220}).
Applying Lemma 1, we have
$$
\mathbb E\left|\sum_{i=1}^{N}(\tilde \xi_i,u_i)_m\right|^2
\leq
\mathbb E\left[\sum_{i=1}^N(D^{-1}_iu_i,u_i)_m
\cdot \sum_{i=1}^N(D_i\tilde \xi_i,\tilde \xi_i)_m\right]
 $$
\begin{equation}\label{Cau250}
=\sum_{i=1}^N(D^{-1}_iu_i,u_i)_m\cdot
\mathbb E
\left[\sum_{i=1}^N(D_i\tilde \xi_i,\tilde \xi_i)_m\right].
\end{equation}
Transform the last factor
on the right-hand side of (\ref{Cau250}):
$$
\mathbb E
\left[\sum_{i=1}^N(D_i\tilde \xi_i,\tilde \xi_i)_m\right]
=\sum_{i=1}^N\mathbb E\left(\sum_{j=1}^m\sum_{k=1}^md_{jk}^{(i)}\tilde \xi_k^{(i)}\bar{\tilde \xi}_j^{(i)}\right)
=\sum_{i=1}^N\sum_{j=1}^m\sum_{k=1}^md_{jk}^{(i)}\mathbb E\tilde \xi_k^{(i)}\bar{\tilde \xi}_j^{(i)}
$$
$$
\sum_{i=1}^N\sum_{j=1}^m\sum_{k=1}^md_{jk}^{(i)}r_{kj}^{(i)}=\sum_{i=1}^NSp\,[D_i\tilde R_i].
$$
Analogously,
$$
\mathbb E\Bigl|\sum_{j=1}^M \int_{\Omega_j}(\tilde\xi_j(t),u_j(t))_ldt\Bigr|^2\leq
\sum_{j=1}^M\int_{\Omega_j}(D^{-1}_j(t)u_j(t),u_j(t))_ldt\cdot
\mathbb E
\left[\sum_{j=1}^M\int_{\Omega_j}(D_j(t)\tilde \xi_j(t),\tilde \xi_j(t))_ldt\right]
$$
and
$$
\mathbb E
\left[\sum_{j=1}^M\int_{\Omega_j}(D_j(t)\tilde \xi_j(t),\tilde \xi_j(t))_ldt\right]=\sum_{j=1}^M\int_{\Omega_j}Sp\,[D_j(t)\tilde R_j(t,t)]dt
$$
Taking into account (\ref{d6llvvv}) and (\ref{d6llvvv'}0 we deduce from  (\ref{Cau250})
$$
\mathbb E\left|\sum_{i=1}^{N}(u_i,\tilde \xi_i)_m\right|^2+\mathbb E\Bigl|\sum_{j=1}^M \int_{\Omega_j}(\tilde\xi_j(t),u_j(t))_ldt\Bigr|^2
\leq\sum_{i=1}^N(D^{-1}_iu_i,u_i)_m+\sum_{j=1}^M\int_{\Omega_j}(D^{-1}_j(t)u_j(t),u_j(t))_ldt
$$
It is not difficult to check that here, the equality sign is attained at the element
$$
\xi^{(0)}=(\xi_{1}^{(0)},\ldots, \xi_{N}^{(0)},\xi_{1}^{(0)}(\cdot),\ldots,\xi_{M}^{(0)}(\cdot))\in G_2
$$
with
$$
\xi_i^{(0)}=
\frac{\eta D_i^{-1}u_i}{
\left[\sum_{i=1}^N\left(D_i^{-1}u_i,u_i\right)_m\right]^{1/2}},\quad i=1,\dots,N,
$$
$$
\xi_j^{(0)}(t)=
\frac{\eta D_j^{-1}(t)u_j(t)}{
\left[\sum_{j=1}^M\int_{\Omega_j}\left(D_j^{-1}(t)u_j(t),u_j(t)\right)_ldt\right]^{1/2}},\quad j=1,\dots,M,
$$
where $\eta$ is a random variable such that $\mathbb E\eta=0$ and
$\mathbb E|\eta|^2=1.$
Hence,
\begin{multline}\label{Cau250'}
\sup_{\tilde \xi \in G_2} \Bigl(\mathbb E\left|\sum_{i=1}^{N}(\tilde \xi_i,u_i)_m\right|^2+\mathbb E\Bigl|\sum_{j=1}^M \int_{\Omega_j}(\tilde\xi_j(t),u_j(t))_ldt\Bigr|^2\Bigr)\\
=\sum_{i=1}^N(D^{-1}_iu_i,u_i)_m+\sum_{j=1}^M\int_{\Omega_j}(D^{-1}_j(t)u_j(t),u_j(t))_ldt.
\end{multline}
The statement of the lemma follows now from (\ref{Cau220}), (\ref{Cau240}), (\ref{infc})  and
(\ref{Cau250'}).
The proof is complete.
\end{proof}

Further in the proof of Theorem \ref{ii} stated below,
it will be shown that
solving the optimal control problem  (\ref{Cau140aavvo})$-$(\ref{N4llvvv}) is reduced to solving some system of differential equations.
\begin{pred}\label{ii}
The minimax estimate $\widehat{\widehat{l(x)}}$ of expression $l(x)$ has the form
$$
\widehat{\widehat{l(x)}}=\sum_{i=1}^N(y_i,\hat u_i)_m+\sum_{j=1}^M \int_{\Omega_j}(y_j(t),\hat u_j(t))_ldt+\hat c=l(\hat x),
$$
where
\begin{multline}\label{ddd7llvvv}
\hat{u}_i=D_iH_ip(t_i),\quad i=1,\ldots,N,\quad \hat u_j(t)=D_j(t)H_j(t)p(t), \\ j=1,\ldots,M,\quad \hat c=\int_{0}^{T}\Bigl(f_0(t),B^*(t)\hat z(t)\Bigr)_{r}dt,
\end{multline}
and vector-functions $p(t),$ $\hat z(t)$, and $\hat x(t)$
are determined from
the solution of the systems of equations
\begin{equation}\label{Cau140aavv'}
-\frac{d\hat z(t)}{dt}=A^*(t)\hat z(t)+l_0(t)-\sum_{j=1}^M \chi_{\Omega_j}(t)H^*_j(t)D_j(t)H_j(t)p(t),\quad t\in (0,T),\quad t\neq t_i,
\end{equation}
\begin{equation}\label{Cau160avv'}
\Delta\hat z\left.\right|_{t=t_i}=\hat z(t_i+0)-\hat z(t_i)=H^*_iD_iH_ip(t_i),\quad i=1,\ldots,N,\quad \hat z(T)=\hat z(0),
\end{equation}
\begin{equation}\label{Cau170avv'}
\frac{dp(t)}{dt}=A(t)p(t)+\tilde Q(t)\hat z(t),\quad t\in (0,T),\quad t\neq t_i,
\end{equation}
\begin{equation}\label{Cau140aavv}
\Delta p\left.\right|_{t=t_i}=p(t_i+0)-p(t_i)=0,\quad i=1,\ldots,N,\quad p(0)=p(T)
\end{equation}
and
\begin{equation}\label{Cau160avv}
-\frac{d\hat p(t)}{dt}=A^*(t)\hat p(t)-\sum_{j=1}^M \chi_{\Omega_j}(t)H^*_j(t)D_j(t)[H_j(t)\hat x(t)-y_j(t)],\quad t\in (0,T),\quad t\neq t_i,
\end{equation}
\begin{equation}\label{Cau170avv}
\Delta\hat p\left.\right|_{t=t_i}=\hat p(t_i+0)-\hat p(t_i)=H^*_iD_i[H_i\hat x(t_i)-y_i],\quad i=1,\ldots,N,\quad \hat p(T)=\hat p(0),
\end{equation}
\begin{equation}\label{Cau180aavv}
\frac{d\hat x(t)}{dt}=A(t)\hat x(t)+\tilde Q(t)\hat p(t)+B(t)f_0(t),\quad t\in (0,T),\quad t\neq t_i,
\end{equation}
\begin{equation}\label{Cau190aavv}
\Delta\hat x\left.\right|_{t=t_i}=\hat x(t_i+0)-\hat x(t_i)=0,\quad i=1,\ldots,N,\quad\hat x(0)=\hat x(T),
\end{equation}
respectively.
Problems
\eqref{Cau140aavv'} -- \eqref{Cau140aavv} and \eqref{Cau160avv} -- \eqref{Cau190aavv} are uniquely solvable.
Equations \eqref{Cau160avv} -- \eqref{Cau190aavv}  are fulfilled with probability $1.$

The minimax estimation error $\sigma$ is determined by the formula
\begin{equation} \label{ddd8vvv}
\sigma=[l(p)]^{1/2}.
\end{equation}
\end{pred}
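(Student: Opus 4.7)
By the preceding lemma, the minimax estimate reduces to minimizing the strictly convex quadratic functional $I(u)$ in \eqref{N4llvvv} over $u\in H$. The first step is to write the optimality condition $I'(\hat u)=0$ via a Gateaux derivative. Since $u\mapsto z(\cdot;u)$ is affine, $w:=\partial_\tau z(\cdot;\hat u+\tau v)|_{\tau=0}$ solves the homogeneous version of \eqref{Cau140aavvo}--\eqref{Cau160avv'o} with $l_0\equiv 0$ and $u$ replaced by $v$. Introducing $p(t)$ as the solution of \eqref{Cau170avv'}--\eqref{Cau140aavv} driven by $\hat z$ and applying the integration-by-parts formula \eqref{Cau30} to $(p,w)_n$ on each subinterval $(t_{i-1},t_i)$, the jumps of $w$ at the $t_i$ and the characteristic functions $\chi_{\Omega_j}$ convert $\int_0^T(\tilde Q\hat z,w)_n\,dt$ into $\sum_i(H_ip(t_i),v_i)_m+\sum_j\int_{\Omega_j}(H_jp,v_j)_l\,dt$. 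Setting the derivative to zero for every $v\in H$ and reading off the coefficients yields $\hat u_i=D_iH_ip(t_i)$ and $\hat u_j(t)=D_j(t)H_j(t)p(t)$; substituting these into \eqref{Cau140aavvo}--\eqref{Cau160avv'o} produces the coupled periodic boundary-value problem \eqref{Cau140aavv'}--\eqref{Cau140aavv}, while $\hat c$ is read off directly from \eqref{infc} with $z=\hat z$.

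For unique solvability of \eqref{Cau140aavv'}--\eqref{Cau140aavv} I will argue variationally: $I$ is strictly convex on $H$ and coercive thanks to the uniform positive definiteness of $D_i^{-1}$ and $D_j^{-1}(t)$, hence it admits a unique minimizer $\hat u$. Substituting $\hat u$ back into \eqref{Cau140aavvo}--\eqref{Cau160avv'o} and into \eqref{Cau170avv'}--\eqref{Cau140aavv}, each regarded separately as a periodic problem with given right-hand side, is uniquely solvable by the results recalled around \eqref{adjCau40det}--\eqref{Cau600}; this identifies $\hat z$ and $p$ uniquely. The same variational argument, applied to the analogous convex problem arising after substitution of the observations, gives unique solvability of \eqref{Cau160avv}--\eqref{Cau190aavv} (the randomness of $y_i,y_j(t)$ enters only as a right-hand side, so the equations hold with probability one).

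Next, to identify $\widehat{\widehat{l(x)}}$ with $l(\hat x)$, substitute $\hat u_i$, $\hat u_j(t)$, $\hat c$ from \eqref{ddd7llvvv} into $\widehat{\widehat{l(x)}}$ and test the $\hat z$ equation \eqref{Cau140aavv'}--\eqref{Cau160avv'} against $\hat x$ using \eqref{Cau30} piecewise. Periodicity of $\hat x$ and $\hat z(0)=\hat z(T)$ kill the boundary contributions at $0$ and $T$; the jumps $\Delta\hat z|_{t_i}=H_i^*D_iH_ip(t_i)$ produce $\sum_i(H_i\hat x(t_i),D_iH_ip(t_i))_m$; the $\chi_{\Omega_j}$-terms produce $\sum_j\int_{\Omega_j}(H_j\hat x,D_jH_jp)_l\,dt$; and the $A^*\hat z$/$A\hat x$ pair cancels. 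Invoking \eqref{Cau180aavv} replaces $d\hat x/dt-A\hat x$ by $\tilde Q\hat p+Bf_0$, and a symmetric integration by parts between $\hat p$ and $p$ (using $\Delta p=0$, $\Delta\hat p|_{t_i}=H_i^*D_i(H_i\hat x(t_i)-y_i)$, periodicity of both) converts $\int_0^T(\tilde Q\hat p,\hat z)_n\,dt$ into the observation terms $\sum_i(y_i-H_i\hat x(t_i),D_iH_ip(t_i))_m+\sum_j\int_{\Omega_j}(y_j-H_j\hat x,D_jH_jp)_l\,dt$. Collecting terms matches $l(\hat x)$ with $\widehat{\widehat{l(x)}}$.

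Finally, by the previous lemma $\sigma^2=\inf_u I(u)=I(\hat u)$. The plan for \eqref{ddd8vvv} is to test \eqref{Cau140aavv'}--\eqref{Cau160avv'} against $p$ via \eqref{Cau30}: periodicity of $p$ and $\hat z$ cancels the endpoint terms, the jumps contribute $\sum_i(D_iH_ip(t_i),H_ip(t_i))_m$, the $\chi_{\Omega_j}$-terms contribute $\sum_j\int_{\Omega_j}(D_jH_jp,H_jp)_l\,dt$, the operator transpose converts the remaining $(Ap,\hat z)$ pairing into $(dp/dt-Ap,\hat z)_n=(\tilde Q\hat z,\hat z)_n$ via \eqref{Cau170avv'}, and the left-hand side becomes $\int_0^T(p,l_0)_n\,dt=l(p)$. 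This gives exactly $I(\hat u)=l(p)$, hence $\sigma=[l(p)]^{1/2}$. The main obstacle is the careful bookkeeping of the jump and periodicity terms in these integrations by parts, especially in the step identifying $\widehat{\widehat{l(x)}}$ with $l(\hat x)$, where two coupled systems must be paired consistently.
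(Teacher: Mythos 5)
Your plan is correct and follows essentially the same route as the paper: the variational characterization of the unique minimizer $\hat u$ of $I(u)$, introduction of $p$ via \eqref{Cau170avv'}--\eqref{Cau140aavv} and piecewise integration by parts to read off $\hat u_i=D_iH_ip(t_i)$, $\hat u_j(t)=D_j(t)H_j(t)p(t)$, unique solvability from uniqueness of the minimizer, the identity $\int_0^T(\tilde Q\hat z,\hat z)_n\,dt=l(p)-\sum_i(H_ip(t_i),D_iH_ip(t_i))_m-\sum_j\int_{\Omega_j}(H_jp,D_jH_jp)_l\,dt$ giving $\sigma=[l(p)]^{1/2}$, and the coupled integrations by parts (between $\hat z,\hat x$ and $\hat p,p$) giving $\widehat{\widehat{l(x)}}=l(\hat x)$. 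The only cosmetic differences are that you run the $l(\hat x)$ identification in the reverse direction and do not spell out the paper's separate real and imaginary directional derivatives ($\tau v$ and $i\tau v$) needed for the complex optimality condition, which is a minor technicality.
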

\begin{proof}
First notice that that functional $I(u)$ can be represented in the form
\begin{multline}\label{N4llvvv'}
I(u)=\int_{0}^{T}(\tilde Q(t)z(t;u),z(t;u))_ndt
+\sum_{i=1}^N(D_i^{-1}u_i,u_i)_m\\+\sum_{j=1}^M\int_{\Omega_j}(D_j^{-1}(t)u_j(t),u_j(t))_ldt=\tilde I(u)+L(u)+C,
\end{multline}
where
\begin{equation*}
\tilde I(u)=\int_{0}^{T}(Q(t)\tilde z(t;u),\tilde z(t;u))_ndt
+\sum_{i=1}^N(D_i^{-1}u_i,u_i)_m+\sum_{j=1}^M\int_{\Omega_j}(D_j^{-1}(t)u_j(t),u_j(t))_ldt,
\end{equation*}
\begin{equation*}
L(u)=\int_{0}^{T}(Q(t)z^0(t),\tilde z(t;u))_ndt,\quad
C=\int_{0}^{T}(Q(t)z^{0}(t),z^{0}(t))_ndt,
\end{equation*}
$\tilde z(t;u)$ is a solution of problem \eqref{point1} at $l_0(t)=0$  and
$z^0(t)$ is a solution of the same problem at $u=0.$

From \eqref{Cau600}
we obtain
\begin{equation*}\label{}
\|\tilde z(\cdot;u)\|_{(L^2(t_0,T))^n}\leq c_1\|u\|_{H},
\end{equation*}
whence,
$$\tilde I(u)\leq c_2\Bigl(\|\tilde z(\cdot;u)\|_{(L^2(t_0,T))^n}^2+\|u\|_{H}^2\Bigr)\leq c_3\|u\|_{H}^2,$$
where $c_1,$ $c_2,$ and $c_3$ are constants.
This inequality means that
quadratic form $\tilde I(u)$ is continuous in the space $H.$
Analogously we can show that $L(u)$ is a linear continuous functional in $H.$

It follows from here that $I(u)$ is a continuous strictly convex functional on $H.$ Then, by Corollary 1.8.3 from \cite{Bala},
$I(u)$ is a weak lower semicontinuous strictly convex functional on $H$. Therefore, since
$$
I(u)\geq \sum_{i=1}^N(D_i^{-1}u_i,u_i)_m +\sum_{j=1}^M\int_{\Omega_j}(D_j^{-1}(t)u_j(t),u_j(t))_ldt\geq
c\|u\|_H^2 \quad
\forall u\in H,\,\, \mbox{c=const},
$$
then, by Theorems 13.2 and 13.4 (see \cite{Badr}), there exists one and only one element $\hat
{u}\in H$ such that
$I(\hat{u})=\inf_{u\in H}I(u)$.
Hence, for any fixed $
v\in H$ and $\tau \in \mathbb R$ the functions $s_1(\tau):=I(\hat{
u}+\tau v)$ and $s_2(\tau):=I(\hat{
u}+i\tau v)$ reach their  minimums at a unique point $\tau
=0,$
 so that
\begin{equation}\label{7.82mm}
\frac{d}{d\tau}I(\hat u+\tau
v)\Bigl.\Bigr|_{\tau=0}=0\quad\mbox{and}\quad
\frac{d}{d\tau}I(\hat u+i\tau v)\Bigl.\Bigr|_{\tau=0}=0,
\end{equation}
where $i=\sqrt{-1}.$
Since $z(t;\hat u + \tau v) = z(t; \hat u) + \tau\tilde{z}(t;v),$
where $\tilde{z}(t;v)$ is a unique solution to problem \eqref{Cau140aavvo}, \eqref{Cau160avv'o} at $l_0=0$ and $u=v,$ from (\ref{N4llvvv}) and
\eqref{7.82mm}, we obtain
\begin{equation*}\label{oollb1x}
0=\mbox{Re}\Bigl\{\sum_{i=1}^{N+1}\int_{t_{i-1}}^{t_{i}}(\tilde Q(t)z_{i}(t;\hat u),\tilde z_{i}(t;v))_ndt
+\sum_{i=1}^N(D_i^{-1}\hat u_i, v_i)_m+\sum_{j=1}^M\int_{\Omega_j}(D_j^{-1}(t)\hat u_j(t),v_j(t))_ldt\Bigr\},
\end{equation*}
and
\begin{equation*}\label{oollb2x}
0=\mbox{Im}\Bigl\{\sum_{i=1}^{N+1}\int_{t_{i-1}}^{t_{i}}(\tilde Q(t)z_{i}(t;\hat u),\tilde z_{i}(t;v))_ndt
+\sum_{i=1}^N(D_i^{-1}\hat u_i, v_i)_m+\sum_{j=1}^M\int_{\Omega_j}(D_j^{-1}(t)\hat u_j(t),v_j(t))_ldt\Bigr\},
\end{equation*}
where  $\tilde z_{i}(t;v)$ have the same sense as $z_{i}(t;u)$, $i=1,\dots, N+1$ (see page \pageref{same}).
Whence,
\begin{equation}\label{oollb}
0=\sum_{i=1}^{N+1}\int_{t_{i-1}}^{t_{i}}(\tilde Q(t)z_{i}(t;\hat u),\tilde z_{i}(t;v))_ndt
+\sum_{i=1}^N(D_i^{-1}\hat u_i, v_i)_m+\sum_{j=1}^M\int_{\Omega_j}(D_j^{-1}(t)\hat u_j(t),v_j(t))_ldt.
\end{equation}
Let $p(t)$ be a solution of the problem
\begin{equation}\label{aCau170avv'}
\frac{dp(t)}{dt}=A(t)p(t)+\tilde Q(t)\hat z(t;\hat u),\quad t\in (0,T),\quad t\neq t_i,
\end{equation}
\begin{equation}\label{aCau140aavv}
\Delta p(t)\left.\right|_{t=t_i}=p(t_i+0)-p(t_i)=0,\quad i=1,\ldots,N,\quad p(0)=p(T)
\end{equation}
and
$p_i(t)$ be restriction of $p(t)$ to $(t_{i-1},t_{i})$ extended by continuity to $t_{i-1}$ and $t_{i},$ $i=1,\dots,N+1,$ satisfying the equations
\begin{gather}
Lp_1(t)=\tilde Q(t)z_1(t;\hat u),\quad 0=t_0<t<t_1, \quad p_1(t_0)=p_{N+1}(t_{N+1}),\notag\\
Lp_2(t)=\tilde Q(t)z_2(t;\hat u),\quad t_1<t<t_2, \quad p_2(t_1)=p_1(t_1),\notag\\
\ldots\ldots\ldots\ldots\ldots\ldots\ldots\ldots\ldots
\ldots\ldots\ldots\ldots\ldots\ldots\ldots\ldots\notag\\
Lp_{i}(t)=\tilde Q(t)z_i(t;\hat u),\quad t_{i-1}<t<t_{i}, \quad p_{i}(t_{i-1})=p_{i-1}(t_{i-1}),\label{point1a}\\
\ldots\ldots\ldots\ldots\ldots\ldots\ldots\ldots\ldots
\ldots\ldots\ldots\ldots\ldots\ldots\ldots\ldots\notag\\
Lp_{N}(t)=\tilde Q(t)z_N(t;\hat u),\quad t_{N-1}<t<t_{N}, \quad p_{N}(t_{N})=p_{N-1}(t_{N}),\notag\\
Lp_{N+1}(t)=\tilde Q(t)z_{N+1}(t),\quad t_{N}<t<t_{N+1}=T, \quad p_{N+1}(t_N)=p_{N}(t_N),\notag
\end{gather}
where
$$
Lp_i(t):=\frac{dp_i(t)}{dt}-A(t)p_i(t),\quad t_{i-1}<t<t_{i},\quad i=1,\dots,N+1.
$$
Then we have
$$
\sum_{i=1}^{N+1}\int_{t_{i-1}}^{t_{i}}(\tilde Q(t)z_{i}(t;\hat u),\tilde z_{i}(t;v))_ndt
=\sum_{i=1}^{N+1}\int_{t_{i-1}}^{t_{i}}\Bigl(\frac{dp_i(t)}{dt}-A(t)p_i(t),\tilde z_{i}(t;v)\Bigr)_ndt
$$
$$
=\sum_{i=1}^{N+1}\Bigl((p_i(t_i),\tilde z_{i}(t_i;v))_n-(p_i(t_{i-1}),\tilde z_{i}(t_{i-1};v))_n \Bigr)
$$
$$
+\sum_{i=1}^{N+1}\int_{t_{i-1}}^{t_{i}}\Bigl(p_{i}(t),-\frac{d\tilde z_i(t;v)}{dt}-A^*(t)\tilde z_i(t;v)\Bigr)_ndt
$$
$$
=\sum_{i=1}^{N}(p_i(t_i),\tilde z_{i}(t_i;v))_n+(p_{N+1}(t_{N+1}),\tilde z_{N+1}(t_{N+1};v))_n-(p_{1}(t_{0}),\tilde z_{1}(t_{0};v))_n
$$
$$
-\sum_{i=2}^{N+1}(p_i(t_{i-1}),\tilde z_{i}(t_{i-1};v))_n=\sum_{i=1}^{N}(p_i(t_i),\tilde z_{i}(t_i;v))_n-\sum_{i=1}^{N}(p_{i+1}(t_{i}),\tilde z_{i+1}(t_{i};v))_n
$$
$$
=-\sum_{j=1}^M \int_{\Omega_j}(p(t),H^*_j(t)v_j(t))_ndt+\sum_{i=1}^{N}(p_i(t_i),\tilde z_{i}(t_i;v)-\tilde z_{i+1}(t_i;v))_n
$$
\begin{equation}\label{oollb1}
=-\sum_{j=1}^M \int_{\Omega_j}(p(t),H^*_j(t)v_j(t))_ndt-\sum_{i=1}^{N}(p_i(t_i),H_i^*v_i)_n.
\end{equation}
From \eqref{oollb} and \eqref{oollb1}, we find
\begin{equation}\label{oollb1x}
\hat{u}_i=D_iH_ip_i(t_i),\quad i=1,\ldots,N,\quad \hat u_j(t)=D_j(t)H_j(t)p(t), \quad  j=1,\ldots,M.
\end{equation}
 Setting
\begin{multline*}
u=\hat u=(D_1H_1p_1(t_1),\dots,D_iH_ip_i(t_i),\dots,D_NH_Np_N(t_N),\\D_1(t)H_1(t)p(t),\dots,D_j(t)H_j(t)p(t),\dots,D_M(t)H_M(t)p(t))
\end{multline*}
 in (\ref{infc}) and (\ref{point1}) and denoting  $\hat{z}(t)=z(t;\hat {u}),$
we see that $\hat{z}(t)$ and $p(t)$ satisfy system \eqref{Cau140aavv'} -- \eqref{Cau140aavv};  the unique solvability of
this system follows from the fact that
 functional $I(u)$ has one minimum point $\hat u$.

Now let us establish that $\sigma=[l(p)]^{1/2}.$
Substituting expression (\ref{oollb1x}) to (\ref{N4llvvv}),
we obtain
\begin{multline}\label{N4llvvvb}
\sigma(\hat u,\hat c)=I(\hat u)=\sum_{i=1}^{N+1}\int_{t_{i-1}}^{t_{i}}(\tilde Q(t) \hat z_{i}(t),\hat z_{i}(t))_ndt
\\+\sum_{i=1}^N(H_ip_i(t_i),D_iH_ip_i(t_i))_m+\sum_{j=1}^M\int_{\Omega_j}(H_j(t)p(t),D_j(t)H_j(t)p(t))_ldt.
\end{multline}
But
$$
\sum_{i=1}^{N+1}\int_{t_{i-1}}^{t_{i}}(\tilde Q(t) \hat z_{i}(t),\hat z_{i}(t))_ndt
=\sum_{i=1}^{N+1}\int_{t_{i-1}}^{t_{i}}\Bigl(\frac{dp_i(t)}{dt}-A(t)p_i(t),\hat z_{i}(t)\Bigr)_ndt
$$
$$
=\sum_{i=1}^{N+1}\Bigl((p_i(t_i),\hat z_{i}(t_i))_n-(p_i(t_{i-1}),\hat z_{i}(t_{i-1}))_n \Bigr)
$$
$$
+\sum_{i=1}^{N+1}\int_{t_{i-1}}^{t_{i}}\Bigl(p_i(t),-\frac{d\hat z_{i}(t)}{dt}-A^*(t)\hat z_{i}(t)\Bigr)_ndt
$$
$$
=\sum_{i=1}^{N+1}\int_{t_{i-1}}^{t_{i}}(p_i(t),l_0(t)-\sum_{i=1}^{N}(p_i(t_i),H_i^*D_iH_ip_i(t_i))_n-\sum_{j=1}^M \chi_{\Omega_j}(t)H^*_j(t)D_j(t)H_j(t)p(t))_ndt
$$
\begin{equation}\label{N4llvvvbb}
=l(p)-\sum_{i=1}^{N}(H_ip_i(t_i),D_iH_ip_i(t_i))_m-\sum_{j=1}^M\int_{\Omega_j}(H_j(t)p(t),D_j(t)H_j(t)p(t))_ldt
\end{equation}
The representation \eqref{ddd8vvv} follows from \eqref{N4llvvvb} and \eqref{N4llvvvbb}.

Prove that
\begin{equation}\label{Nllvvvbbb}
\widehat{\widehat{l(x)}}=l(\hat x).
\end{equation}
We should note, first of all that unique solvability of problem \eqref{Cau160avv} -- \eqref{Cau190aavv}  at realizations
$y_i,$ $i=1,\ldots,N,$ that belong with probability 1 to the space $\mathbb R^m$ can be proved similarly as to the problem
\eqref{Cau140aavv'} -- \eqref{Cau140aavv}.

Denote by $\hat p_i(t)$ and $\hat x_i(t)$ restrictions of $\hat p(t)$ and $\hat x(t)$, respectively, to $(t_{i-1},t_{i}),$ $i=1,\dots,N+1,$ extended by continuity to their ends.
Using \eqref{d6llvvves}, \eqref{ddd7llvvv}, and \eqref{Cau170avv}, we have
$$
\widehat{\widehat{l(x)}}=\sum_{i=1}^N(y_i,\hat u_i
)_m+\sum_{j=1}^M \int_{\Omega_j}(y_j(t),\hat u_j(t))_ldt+\hat c
$$
$$
=\sum_{i=1}^N(y_i,D_iH_ip_i(t_i))_m+\sum_{j=1}^M \int_{\Omega_j}(y_j(t),D_j(t)H_j(t)p(t))_ldt+\hat c
$$
$$
=\sum_{i=1}^N(H_i^*D_iy_i,p_i(t_i))_n+\int_0^T\sum_{j=1}^M \chi_{\Omega_j}(t)H_j^*(t)D_j(t)y_j(t),p(t))_ldt+\hat c
$$
$$
=\sum_{i=1}^N\Bigl(\hat p_{i}(t_i)-\hat p_i(t_{i+1})+H_i^*D_iH_i\hat x_i(t_i),p_i(t_i)\Bigr)_n
+\int_{0}^{T}\Bigl(B^*(t)\hat z(t),f_0(t)\Bigr)_{r}dt
$$
\begin{equation}\label{N4llvvvbbb}
+\int_0^T\Bigl(-\frac{d\hat p_i(t)}{dt}-A^*(t)\hat p_i(t)+\sum_{j=1}^M \chi_{\Omega_j}(t)H^*_j(t)D_j(t)H_j(t)\hat x(t),p_i(t)\Bigr)_ndt
\end{equation}
From \eqref{Cau140aavv'} -- \eqref{Cau140aavv} and \eqref{Cau160avv} -- \eqref{Cau190aavv}, we obtain
$$
\int_0^T\Bigl(-\frac{d\hat p_i(t)}{dt}-A^*(t)\hat p_i(t),p(t)\Bigr)_ldt=\sum_{i=1}^{N+1}\int_{t_{i-1}}^{t_i}\Bigl(-\frac{d\hat p_{i}(t)}{dt}-A^*(t)\hat p_{i}(t),p_{i}(t)\Bigr)_ndt
$$
$$
=\sum_{i=1}^{N+1}\Bigl((\hat p_{i}(t_{i-1}),p_{i}(t_{i-1}))_n-(\hat p_{i}(t_{i}),p_{i}(t_{i}))_n\Bigr)+\sum_{i=1}^{N+1}
\int_{t_{i-1}}^{t_i}\Bigl(\hat p_{i}(t),\frac{dp_{i}(t)}{dt}-A(t) p_{i}(t)\Bigr)_ndt
$$
$$
=(\hat p_1(t_0),p_1(t_0))_n+\sum_{i=2}^{N+1}(\hat p_{i}(t_{i-1}),p_{i}(t_{i-1}))_n-\sum_{i=1}^{N}(\hat p_{i}(t_{i}),p_{i}(t_{i}))_n-(\hat p_{N+1}(t_{N+1}),p_{N+1}(t_{N+1}))_n
$$
$$
+\sum_{i=1}^{N+1}\int_{t_{i-1}}^{t_i}\Bigl(\hat p_{i}(t),\tilde Q(t)\hat z_i(t)\Bigr)_ndt
$$
$$
=\sum_{i=1}^{N}(\hat p_{i+1}(t_{i}),p_{i}(t_{i}))_n-\sum_{i=1}^{N}(\hat p_{i}(t_{i}),p_{i}(t_{i}))_n
+\sum_{i=1}^{N+1}\int_{t_{i-1}}^{t_i}\Bigl(\tilde Q(t)\hat p_{i}(t),\hat z_i(t)\Bigr)_ndt
$$
$$
=\sum_{i=1}^{N}(\hat p_{i+1}(t_{i})-\hat p_{i}(t_{i}),p_{i}(t_{i}))_n
$$
\begin{equation}\label{N4llvvvbbb'}
+\sum_{i=1}^{N+1}
\int_{t_{i-1}}^{t_i}\Bigl(\frac{d\hat x_{i}(t)}{dt}-A(t) \hat x_{i}(t),\hat z_{i}(t)\Bigr)_ndt
-\sum_{i=1}^{N+1}\int_{t_{i-1}}^{t_i}\Bigl(B(t)f_0(t),\hat z_{i}(t)\Bigr)_ndt.
\end{equation}
But
$$
\sum_{i=1}^{N+1}
\int_{t_{i-1}}^{t_i}\Bigl(\frac{d\hat x_{i}(t)}{dt}-A(t) \hat x_{i}(t),\hat z_{i}(t)\Bigr)_ndt
$$
$$
=\sum_{i=1}^{N+1}\Bigl((\hat x_{i}(t_{i}),\hat z_{i}(t_{i}))_n-(\hat x_{i}(t_{i-1}),\hat z_{i}(t_{i-1}))_n\Bigr)+\sum_{i=1}^{N+1}
\int_{t_{i-1}}^{t_i}\Bigl(\hat x_{i}(t),-\frac{d\hat z_{i}(t)}{dt}-A^*(t) \hat z_{i}(t)\Bigr)_ndt
$$
$$
=\sum_{i=1}^{N}(\hat x_i(t_i),\hat z_i(t_i))_n
-\sum_{i=2}^{N+1}(\hat x_{i}(t_{i-1}),\hat z_{i}(t_{i-1}))_n+\int_{t_{0}}^{T}(\hat x(t),l_0(t)-\sum_{j=1}^M \chi_{\Omega_j}(t)H^*_j(t)D_j(t)H_j(t)p(t))_ndt
$$
$$
=l(\hat x)
+\sum_{i=1}^{N}(\hat x_i(t_i),\hat z_i(t_i)-\hat z_{i+1}(t_{i}))_n-\sum_{j=1}^M \int_{\Omega_j}(\hat x(t),H^*_j(t)D_j(t)H_j(t)p(t))_ndt
$$
\begin{equation}\label{N4llvvvbbb''}
=l(\hat x)
-\sum_{i=1}^{N}(\hat x_i(t_i),H^*_iD_iH_ip(t_i)))_n-\sum_{j=1}^M \int_{\Omega_j}(\hat x(t),H^*_j(t)D_j(t)H_j(t)p(t))_ndt.
\end{equation}
The representation \eqref{Nllvvvbbb} follows from \eqref{N4llvvvbbb}--\eqref{N4llvvvbbb''}.
\end{proof}
If observations are only pointwise, i.e.,
$
H_j(t)=0$ and $\xi_j(t)=0,$ $j=1,\dots,M,
$
in \eqref{sk7s},
the systems that generate minimax estimates
take the form
\begin{equation}\label{Cau140aavv'm}
-\frac{d\hat z(t)}{dt}=A^*(t)\hat z(t)+l_0(t),\quad t\in (0,T),\quad t\neq t_i,
\end{equation}
\begin{equation}\label{Cau160avv'm}
\Delta\hat z\left.\right|_{t=t_i}=H^*_iD_iH_ip(t_i),\quad i=1,\ldots,N,\quad \hat z(T)=\hat z(0),
\end{equation}
\begin{equation}\label{Cau170avv'm}
\frac{dp(t)}{dt}=A(t)p(t)+\tilde Q(t)\hat z(t),\quad t\in (0,T),\quad t\neq t_i,
\end{equation}
\begin{equation}\label{Cau140aavvm}
\Delta p\left.\right|_{t=t_i}=0,\quad i=1,\ldots,N,\quad p(0)=p(T)
\end{equation}
and
\begin{equation}\label{Cau160avvm}
-\frac{d\hat p(t)}{dt}=A^*(t)\hat p(t),\quad t\in (0,T),\quad t\neq t_i,
\end{equation}
\begin{equation}\label{Cau170avvm}
\Delta\hat p\left.\right|_{t=t_i}=H^*_iD_i[H_i\hat x(t_i)-y_i],\quad i=1,\ldots,N,\quad \hat p(T)=\hat p(0),
\end{equation}
\begin{equation}\label{Cau180aavvm}
\frac{d\hat x(t)}{dt}=A(t)\hat x(t)+\tilde Q(t)\hat p(t)+B(t)f_0(t),\quad t\in (0,T),\quad t\neq t_i,
\end{equation}
\begin{equation}\label{Cau190aavvm}
\Delta\hat x\left.\right|_{t=t_i}=0,\quad i=1,\ldots,N,\quad\hat x(0)=\hat x(T),
\end{equation}
respectively.
Show that in the case
the determination of functions $p(t)$ and $\hat z(t)$ from \eqref{Cau140aavv'} -- \eqref{Cau140aavv} can be reduced to solving
some system of linear algebraic equations.  To do this,
let us represent function $\hat z(t)$ in the form
\begin{equation}\label{hbe}
\hat z(t)=z(t;\hat u)=\bar z^{(0)}(t)+\sum_{i=1}^N\bar z^{(i)}(t;\hat u),
\end{equation}
where functions $\bar z^{(0)}(t)$ and $\bar z^{(i)}(t;u),$ $i=1,\dots,N,$ solve the problems
\begin{equation*}
-\frac{d\bar z^{(0)}(t)}{dt}=A^*(t)\bar z^{(0)}(t)+l_0(t),\quad t\in (0,T),\quad \bar z^{(0)}(T)=\bar z^{(0)}(0),
\end{equation*}
and
\begin{equation*}
-\frac{d\bar z^{(i)}(t;\hat u)}{dt}=A^*(t)\bar z^{(i)}(t;\hat u),\quad t\in (0,T),\quad t\neq t_i,
\end{equation*}
\begin{equation*}
\quad \bar z^{(i)}(t_i+0;\hat u)-\bar z^{(i)}(t_i;\hat u)=H^*_i\hat u_i,
\quad \bar z^{(i)}(T;\hat u)=\bar z^{(i)}(0;\hat u),
\end{equation*}
respectively.
It is easy to see that
\begin{equation}\label{p1}
\bar z^{(0)}(t)=Z(t)(E_n-Z(T))^{-1}\int_0^TZ(T)Z^{-1}(s)l(s)\,ds+\int_0^tZ(t)Z^{-1}(s)l(s),
\end{equation}
\begin{equation}\label{p2}
\bar z^{(i)}(t;\hat u)=Z(t)M_i(t)Z^{-1}(t_i)H^*_i\hat u_i,
\end{equation}
where $Z(t)=[X^*(t)]^{-1},$ $M_i(t)=(E_n-Z(T))^{-1}Z(T)+\chi_{(t_i,T)}(t)E_n,$ $\chi_{(t_i,T)}(t)$ is the characteristic function of the interval
$(t_i,T).$

In fact, \eqref{p1} is the direct corollary of the Cauchy formula. For proof of \eqref{p2}, we note that if $0<t<t_i$ then
\begin{equation}\label{p6}
\bar z^{(i)}(t;\hat u)=Z(t)\bar z^{(i)}(0;\hat u)
\end{equation}
and
$
\bar z^{(i)}(t_i-0;\hat u)=Z(t_i)\bar z^{(i)}(0;\hat u).
$
If $t_i<t<T$ then
$$
\bar z^{(i)}(t;\hat u)=Z(t)Z^{-1}(t_i)\bar z^{(i)}(t_i+0;\hat u)=
Z(t)Z^{-1}(t_i)(\bar z^{(i)}(t_i-0;\hat u)+H^*_i\hat u_i)
$$
$$
=Z(t)Z^{-1}(t_i)Z(t_i)\bar z^{(i)}(0;\hat u)+Z(t)Z^{-1}(t_i)H^*_i\hat u_i
$$
\begin{equation}\label{p7}
=Z(t)\bar z^{(i)}(0;\hat u)+Z(t)Z^{-1}(t_i)H^*_i\hat u_i.
\end{equation}
Setting in \eqref{p7} $t=T$ we find
$$
\bar z^{(i)}(T;\hat u)=Z(T)\bar z^{(i)}(0;\hat u)+Z(T)Z^{-1}(t_i)H^*_i\hat u_i
$$
Due to the equality $\bar z^{(i)}(T;\hat u)=\bar z^{(i)}(0;\hat u),$ it follows from here that
$$
\bar z^{(i)}(0;\hat u)=Z(T)\bar z^{(i)}(0;\hat u)+Z(T)Z^{-1}(t_i)H^*_i\hat u_i
$$
and
$$
\bar z^{(i)}(0;\hat u)=(E_n-Z(T))^{-1}Z(T)Z^{-1}(t_i)H^*_i\hat u_i.
$$
Substituting this expression into \eqref{p6} and \eqref{p7}, we obtain
\begin{equation}\label{p8}
\bar z^{(i)}(t;\hat u)=Z(t)(E_n-Z(T))^{-1}Z(T)Z^{-1}(t_i)H^*_i\hat u_i,\quad\mbox{if}\quad 0<t<t_i
\end{equation}
and
\begin{equation}\label{p9}
\bar z^{(i)}(t;\hat u)=Z(t)[(E_n-Z(T))^{-1}Z(T)+E_n]Z^{-1}(t_i)H^*_i\hat u_i,\quad\mbox{if}\quad t_i<t<T.
\end{equation}
Combining \eqref{p8}, and \eqref{p9} we get \eqref{p2}.

Further, using the Cauchy formula, \eqref{hbe}, \eqref{p1}, and \eqref{p2} we obtain from equations \eqref{Cau170avv'm} and \eqref{Cau140aavvm} that
\begin{equation}\label{p5}
p(t)=X(t)(E-X(T))^{-1}\int_0^TX(T)X^{-1}(s)\tilde Q(s)z(s)\,ds+\int_0^tX(t)X^{-1}(s)l(s)\tilde Q(s)z(s)\,ds
\end{equation}
$$
=X(t)(E-X(T))^{-1}\int_0^TX(T)X^{-1}(s)\tilde Q(s)\bar z^{(0)}(s)\,ds+\int_0^tX(t)X^{-1}(s)l(s)\tilde Q(s)\bar z^{(0)}(s)\,ds
$$
$$
+X(t)(E-X(T))^{-1}X(T)\sum_{k=1}^N\int_0^T X^{-1}(s)\tilde Q(s)Z(s)M_k(s)\,dsZ^{-1}(t_k)H^*_k\hat u_k
$$
$$
+X(t)\sum_{k=1}^N\int_0^t X^{-1}(s)\tilde Q(s)Z(s)M_k(s)\,dsZ^{-1}(t_k)H^*_k\hat u_k
$$
\begin{equation}\label{p3}
=X(t)C_0(t)+X(t)\sum_{k=1}^N[(E-X(T))^{-1}X(T)C_k(T)+C_k(t)]Z^{-1}(t_k)H^*_k\hat u_k,
\end{equation}
where
$$
C_0(t)=(E-X(T))^{-1}\int_0^TX(T)X^{-1}(s)\tilde Q(s)\bar z^{(0)}(s)\,ds+\int_0^tX^{-1}(s)l(s)\tilde Q(s)\bar z^{(0)}(s)\,ds,
$$
$$
C_k(t)=\int_0^t X^{-1}(s)\tilde Q(s)Z(s)M_k(s)\,ds.
$$
Setting in \eqref{p3} $t=t_i$ and $\hat{u}_k=D_kH_kp(t_k),$ $i=1,\dots,N,$ $k=1,\dots,N,$ we arrive at the following system of linear algebraic equations
for determination of unknown quantities $p(t_i):$
$$
p(t_i)=X(t_i)C_0(t_i)+X(t_i)\sum_{k=1}^N[(E-X(T))^{-1}X(T)C_k(T)+C_k(t_i)]Z^{-1}(t_k)H^*_kD_kH_kp(t_k)
$$
or
\begin{equation}\label{p4}
p(t_i)+\sum_{k=1}^N\alpha_{ik}p(t_k)=b_i, \quad i=1,\dots,N,
\end{equation}
where
$$
\alpha_{ik}=-X(t_i)\sum_{k=1}^N[(E-X(T))^{-1}X(T)C_k(T)+C_k(t_i)]Z^{-1}(t_k)H^*_kD_kH_k,\quad b_i=X(t_i)C_0(t_i).
$$
Finding $p(t_i)$ from \eqref{p4} we determine $\hat u_i,$ $\bar z^{(i)}(t;\hat u),$ $i=1,\dots,N,$ $\bar z^{(0)}(t),$ $\hat z(t),$
$p(t),$ and $c$ according to \eqref{ddd7llvvv},
 \eqref{p2}, \eqref{p1}, \eqref{hbe}, and \eqref{p5}, respectively.

In a similar way we can deduce a system of linear algebraic equations
via whose solution the functions $\hat x(t)$ and $\hat p(t)$ satisfying \eqref{Cau160avv} -- \eqref{Cau190aavv} are
expressed.

\renewcommand{\refname}
{\Large \bf
References
}

\end{document}